\newtheorem{theorem}{Theorem}
\newtheorem{corollary}[theorem]{Corollary}
\newtheorem{lemma}[theorem]{Lemma}
\newcommand{\End}{\operatorname{End}}
\begin{document}
\title[Extension of a result of Dickson and Fuller]{Additive Unit Representations in Endomorphism Rings and an Extension of a result of Dickson and Fuller}
\author{Pedro A. Guil Asensio}
\address{Departamento de Mathematicas, Universidad de Murcia, Murcia, 30100, Spain}
\email{paguil@um.es}
\author{Ashish K. Srivastava}
\address{Department of Mathematics and Computer Science, St. Louis University, St.
Louis, MO-63103, USA}
\email{asrivas3@slu.edu}
\keywords{automorphism-invariant modules, injective modules, quasi-injective modules}
\subjclass[2000]{16D50, 16U60, 16W20}
\dedicatory{Dedicated to T. Y. Lam on his 70th Birthday}

\begin{abstract}
A module is called automorphism-invariant if it is invariant under any automorphism of its injective hull. Dickson and Fuller have shown that if $R$ is a finite-dimensional algebra over a field $\mathbb F$ with more than two elements then an indecomposable automorphism-invariant right $R$-module must be quasi-injective. In this note, we extend and simplify the proof of this result by showing that any automorphism-invariant module over an algebra over a field with more than two elements is quasi-injective. Our proof is based on the study of the additive unit structure of endomorphism rings.
\end{abstract}

\maketitle

\section{Introduction.}

\noindent The study of the additive unit structure of rings has a long tradition. The earliest instance may be found in the investigations of Dieudonn\'{e} on Galois theory of
simple and semisimple rings \cite{D}. In \cite{H}, Hochschild studied additive unit representations of elements in simple algebras and proved that each element of a simple algebra over any field is a sum of units. Later, Zelinsky \cite{Zelinsky} proved that every linear transformation of a vector space $V$ over a division ring $D$ is the sum of two invertible linear transformations except when $V$ is one-dimensional over $\mathbb F_2$. Zelinsky also noted in his paper that this result follows from a previous result of Wolfson \cite{Wolfson}.

The above mentioned result of Zelinsky has been recently extended by Khurana and Srivastava in \cite{KS} where they proved that any element in the endomorphism ring of a continuous module $M$ is a sum of two automorphisms if and only if $\End(M)$ has no factor ring isomorphic to the field of two elements $\mathbb F_2$. In particular, this means that, in order to check if a module $M$ is invariant under endomorphisms of its injective hull $E(M)$, it is enough to check it under automorphisms, provided that $\End(E(M))$ has no factor ring isomorphic to $\mathbb F_2$. Recall that a module $M$ is called  \textit{quasi-injective} if every homomorphism from a submodule $L$ of $M$ to $M$ can be extended to an endomorphism of  $M$. Johnson and Wong characterized quasi-injective modules as those that are invariant under any endomorphism of their injective hulls \cite{JW}. 

A module $M$ which is invariant under automorphisms of its injective hull is called an {\it automorphism-invariant module}. This class of modules was first studied by Dickson and Fuller in \cite{DF} for the particular case of finite-dimensional algebras over fields $\mathbb F$ with more than two elements. They proved  that if $R$ is a finite-dimensional algebra over a field $\mathbb F$ with more than two elements then an indecomposable automorphism-invariant right $R$-module must be quasi-injective. And it has been recently shown in \cite{SS} that this result fails to hold if $\mathbb F$ is a field of two elements. Let us recall that a ring $R$ is said to be of {\it right invariant module type} if every indecomposable right $R$-module is quasi-injective. Thus, the result of Dickson and Fuller states that if $R$ is a finite-dimensional algebra over a field $\mathbb F$ with more than two elements, then $R$ is of right invariant module type if and only if every indecomposable right $R$-module is automorphism-invariant. Examples of automorphism-invariant modules which are not quasi-injective, can be found in \cite{ESS} and \cite{Teply}. And recently, it has been shown in \cite{ESS} that a module $M$ is automorphism-invariant if and only if every monomorphism from a submodule of $M$ extends to an endomorphism of $M$. For more details on automorphism-invariant modules, see \cite{ESS}, \cite{LZ}, \cite{SS}, and \cite{AS}. 

The purpose of this note is to exploit the above mentioned result of Khurana and Srivastava in \cite{KS} in order to extend, as well as to give a much easier proof, of Dickson and Fuller's result by showing that if $M$ is any right $R$-module such that there are no ring homomorphisms from $\End_R(M)$ into the field of two elements $\mathbb{F}_2$, then $M_R$ is automorphism-invariant if and only if it is quasi-injective. In particular, we deduce that if $R$ is an algebra over a field $\mathbb F$ with more than two elements, then a right $R$-module $M$ is automorphism-invariant if and only if it is quasi-injective.

Throughout this paper, $R$ will always denote an associative ring with identity element and modules will be right unital. We refer to \cite{AF} for any undefined notion arising in the text.

\section*{Results.}

We begin this section by proving a couple of lemmas that we will need in our main result.

\begin{lemma} \label{basic}
Let $M$ be a right $R$-module such that $\End(M)$ has no factor isomorphic to $\mathbb{F}_2$. Then $\End(E(M))$ has no factor isomorphic to $\mathbb{F}_2$ either. 
\end{lemma}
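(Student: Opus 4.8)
The plan is to argue by contraposition: assuming there is a surjective ring homomorphism $\varphi \colon \End(E(M)) \to \mathbb{F}_2$, I will manufacture one $\End(M) \to \mathbb{F}_2$. Write $E = E(M)$ and $S = \End(E)$. The bridge between $S$ and $\End(M)$ is the subring $\Phi = \{g \in S : g(M) \subseteq M\}$ of endomorphisms of $E$ that leave $M$ invariant, together with the restriction map $\rho \colon \Phi \to \End(M)$, $g \mapsto g|_M$. First I would check that $\rho$ is a unital ring homomorphism (clear, since $\mathrm{id}_E \in \Phi$ and composition is respected on $M$) and that it is \emph{surjective}: given $f \in \End(M)$, injectivity of $E$ lets me extend $f$ to some $\bar f \in S$, and since $\bar f|_M = f$ maps $M$ into $M$ we have $\bar f \in \Phi$ with $\rho(\bar f) = f$.

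Next I would locate $\ker \rho$ inside the Jacobson radical $J(S)$. If $g \in \Phi$ with $g|_M = 0$, then $M \subseteq \ker g$; as $M$ is essential in $E$, the kernel $\ker g$ is essential in $E$. I would then invoke the classical description of the radical of the endomorphism ring of an injective module, namely $J(S) = \{g \in S : \ker g \text{ is essential in } E\}$, to conclude $g \in J(S)$. Hence $\ker \rho \subseteq J(S)$.

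The final step combines two radical facts. On one side, $\mathbb{F}_2$ is a field, so $J(\mathbb{F}_2) = 0$; since the image of the radical under a ring surjection lands in the radical of the image, $\varphi(J(S)) \subseteq J(\mathbb{F}_2) = 0$, and thus $J(S) \subseteq \ker \varphi$. In particular $\ker \rho \subseteq J(S) \subseteq \ker \varphi$, so the restriction $\varphi|_{\Phi} \colon \Phi \to \mathbb{F}_2$ annihilates $\ker \rho$ and therefore factors through $\rho$, yielding a ring homomorphism $\bar\varphi \colon \End(M) \to \mathbb{F}_2$ with $\bar\varphi \circ \rho = \varphi|_{\Phi}$. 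It is surjective because $\bar\varphi(\mathrm{id}_M) = \varphi(\mathrm{id}_E) = 1$. This produces the forbidden factor of $\End(M)$, completing the contrapositive.

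The step I expect to be the main obstacle is the inclusion $\ker \rho \subseteq J(S)$: everything else is formal, but this rests on the (standard yet nontrivial) fact that the endomorphisms of an injective module with essential kernel are exactly the elements of the Jacobson radical. If one prefers a self-contained argument, I would instead show directly that for $g$ with essential kernel and any $h \in S$, the map $u = 1 - hg$ is injective (if $u(x) = 0$ with $x \neq 0$, pick $r$ with $0 \neq xr \in \ker(hg)$, so that $u(xr) = xr \neq 0$ yet $u(xr) = u(x)r = 0$, a contradiction); since $u$ fixes the essential submodule $\ker(hg)$, its image is both essential and, by injectivity of $E$, a direct summand, hence all of $E$. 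Thus each $1 - hg$ is a unit, giving $g \in J(S)$ without citing the full structure theorem.
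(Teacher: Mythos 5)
Your argument is correct and is essentially the paper's proof read in the other direction: the paper sends $f \in \End(M)$ to the class of an extension in $\End(E(M))/J(\End(E(M)))$ and composes with the induced map to $\mathbb{F}_2$, while you restrict from the subring $\Phi$ of $M$-preserving endomorphisms and factor through $\ker\rho$; both hinge on exactly the same two facts (extensions exist by injectivity, and endomorphisms vanishing on $M$ have essential kernel, hence lie in the Jacobson radical, which any homomorphism to $\mathbb{F}_2$ annihilates). If anything, your write-up makes explicit the well-definedness/radical step that the paper dismisses as ``easily checked,'' and supplies a self-contained proof that essential-kernel endomorphisms of an injective module are quasi-regular.
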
 

\begin{proof}
Let $M$ be any right $R$-module such that $\End(M)$ has no factor isomorphic to $\mathbb{F}_2$ and let $S=\End(E(M))$. We want to show that $S$ has no factor isomorphic to $\mathbb{F}_2$. Assume to the contrary that $\psi: S\rightarrow \mathbb{F}_2$ is a ring homomorphism. As $\mathbb{F}_2\cong\End_{\mathbb Z}(\mathbb{F}_2)$, the above ring homomorphism yields a right $S$-module structure to $\mathbb{F}_2$. Under this right $S$-module structure, $\psi:S\rightarrow \mathbb{F}_2$ becomes a homomorphism of $S$-modules. Moreover, as $\mathbb{F}_2$ is simple as $\mathbb Z$-module, so is as right $S$-module. Therefore, $\ker(\psi)$ contains the Jacobson radical $J(S)$ of $S$ and thus, it factors through a ring homomorphism $\psi':S/J(S)\rightarrow \mathbb{F}_2$.

On the other hand, given any endomorphism $f:M\rightarrow M$, it extends by injectivity to a (non-unique) endomorphism $\varphi_f: E(M)\rightarrow E(M)$
\bigskip
\[
\xymatrix{
M \ar[d]^{}  \ar[r]^{f} &M \ar[d]^{}\\
E(M) \ar[r]^{\varphi_f}                 &E(M).}
\]

\bigskip
 Now define $\eta: \End(M)\rightarrow \frac{S}{J(S)}$ by $\eta(f)=\varphi_f+J(S)$. It may be easily checked that $\eta$ is a ring homomorphism. Clearly, then $\eta \circ \psi': \End(M)\rightarrow \mathbb{F}_2$ is a ring homomorphism. This shows that $\End(M)$ has a factor isomorphic to $\mathbb{F}_2$, a contradiction to our hypothesis.    
Hence, $\End(E(M))$ has no factor isomorphic to $\mathbb{F}_2$. 
\end{proof}

\begin{lemma} $($\cite{KS}$)$ \label{2good} 
Let $M$ be a continuous right module over any ring $S$. Then each element of the endomorphism ring $R=\End(M_S)$ is the sum of two units if and only if $R$ has no factor isomorphic to $\mathbb{F}_2$. 
\end{lemma}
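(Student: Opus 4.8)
The plan is to treat the biconditional as two separate implications, putting essentially all of the work into the "no $\mathbb{F}_2$-factor $\Rightarrow$ sum of two units" direction. The reverse implication I would dispose of immediately: if $\psi\colon R\to\mathbb{F}_2$ were a surjective ring homomorphism and every element of $R$ were a sum of two units, then picking $r$ with $\psi(r)=1$ and writing $r=u+v$ with $u,v$ units, I would use that $\psi$ carries units to units and that $1$ is the only unit of $\mathbb{F}_2$ to conclude $1=\psi(u)+\psi(v)=1+1=0$, a contradiction. So the two-unit-sum property forbids any factor isomorphic to $\mathbb{F}_2$, and all the substance lies in the converse.

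For the converse I would first reduce to the radical quotient $\overline{R}=R/J(R)$. Since $J(R)$ is the Jacobson radical, an element of $R$ is a unit exactly when its image in $\overline{R}$ is a unit; hence it is enough to show each element of $\overline{R}$ is a sum of two units. Indeed, given $r\in R$ with $\overline{r}=\overline{u}_1+\overline{u}_2$ and the $\overline{u}_i$ units of $\overline{R}$, I would lift $\overline{u}_1$ to an arbitrary preimage $u_1$, which is automatically a unit of $R$; then $r-u_1$ maps to the unit $\overline{u}_2$ and is therefore itself a unit, giving $r=u_1+(r-u_1)$. The hypothesis also descends cleanly: because $\mathbb{F}_2$ has zero Jacobson radical, every surjection onto it kills $J(R)$ and factors through $\overline{R}$, so "$R$ has no factor isomorphic to $\mathbb{F}_2$'' is equivalent to the same statement for $\overline{R}$.

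The structural engine is that, for a continuous module $M$, the quotient $\overline{R}=\End(M_S)/J$ is von Neumann regular and right continuous (the module form of Utumi's theorem); this supplies abundant idempotents together with the comparability and gluing of direct summands that (C1) and (C2) guarantee. I would then split $\overline{R}$ by central idempotents into a part carrying a nontrivial matrix/exchange structure and a strongly regular abelian part. On the former, the existence of orthogonal isomorphic idempotents lets me imitate Zelinsky's construction for linear transformations and exhibit any element as a sum of two units directly. On the abelian part the problem reduces, via continuity and a partition-of-unity gluing over the underlying Boolean algebra of central idempotents, to solving $a=u+v$ one residue field at a time, which always succeeds unless some residue field is $\mathbb{F}_2$. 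The hard part will be exactly this last step: showing that a genuine failure of the two-unit decomposition in $\overline{R}$ produces an honest surjection $\overline{R}\to\mathbb{F}_2$ (and hence $R\to\mathbb{F}_2$), rather than merely a characteristic-two residue field somewhere, and arranging the gluing to stay uniform across an atomless Boolean algebra where no primitive idempotents are available to localize at. It is precisely the right continuity of $\overline{R}$ that manufactures the required global units from the local data, and precisely the no-$\mathbb{F}_2$-factor hypothesis that removes the only obstruction.
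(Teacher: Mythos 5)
The paper does not actually prove this lemma: it is imported verbatim from Khurana--Srivastava \cite{KS}, so there is no internal proof to compare your attempt against. Judged on its own terms, your proposal gets the cheap direction and the standard reductions right: the contradiction $1=\psi(u)+\psi(v)=1+1=0$ does show that the two-unit property forbids an $\mathbb{F}_2$-factor; units lift modulo $J(R)$, so it suffices to work in $\overline{R}=R/J(R)$; and any surjection onto $\mathbb{F}_2$ kills $J(R)$, so the hypothesis passes to $\overline{R}$. Invoking that $\overline{R}$ is von Neumann regular and right continuous when $M$ is continuous is also the correct structural input, and the split into a ``matrix part'' (where a Zelinsky/Henriksen-type argument applies, since every element of $M_n(S)$ with $n\ge 2$ is a sum of two units for any ring $S$) versus an abelian strongly regular part is indeed the skeleton of the argument in \cite{KS}.

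However, the proposal is a roadmap rather than a proof, and you say so yourself: the entire content of the hard direction is concentrated in the step you defer, namely showing that in the abelian regular right continuous part a failure of the two-unit decomposition produces an honest ring surjection onto $\mathbb{F}_2$, with the gluing carried out uniformly over a possibly atomless Boolean algebra of central idempotents where no primitive idempotents are available to localize at. Nothing in the proposal indicates how to do this; ``solving $a=u+v$ one residue field at a time'' is not available in the atomless case, and a characteristic-two residue field is not by itself an $\mathbb{F}_2$-factor. Likewise, the clean dichotomy between a matrix part and an abelian part rests on the type decomposition of regular right continuous (or self-injective) rings, which you assert rather than establish. So the easy implication and the reductions are sound, but the theorem itself remains unproved; as the paper does, you should either cite \cite{KS} for this statement or supply the type-decomposition argument and the abelian case in full.
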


We can now prove our main result.

\begin{theorem} \label{char}
Let $M$ be any right $R$-module such that $\End(M)$ has no factor isomorphic to $\mathbb{F}_2$, then $M$ is quasi-injective if and only $M$ is automorphism-invariant.
\end{theorem}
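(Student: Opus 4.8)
The plan is to prove the two implications separately, noting that one of them is immediate and hypothesis-free. For the forward direction, suppose $M$ is quasi-injective. By the Johnson--Wong characterization, $M$ is invariant under \emph{every} endomorphism of $E(M)$, so in particular it is invariant under every automorphism of $E(M)$; hence $M$ is automorphism-invariant. This requires nothing about the factor rings of $\End(M)$.

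The substance lies in the converse, so assume $M$ is automorphism-invariant and aim to show it is quasi-injective. By Johnson--Wong again, it suffices to prove that $M$ is invariant under an arbitrary endomorphism $\varphi \in \End(E(M))$, that is, that $\varphi(M) \subseteq M$. The strategy is to write such a $\varphi$ as a sum of two automorphisms of $E(M)$ and then to use automorphism-invariance on each summand. To produce this decomposition, I would first observe that $E(M)$, being injective, is continuous, so Lemma~\ref{2good} applies to it: every element of $\End(E(M))$ is a sum of two units precisely when $\End(E(M))$ has no factor isomorphic to $\mathbb{F}_2$. The hypothesis on $\End(M)$ together with Lemma~\ref{basic} guarantees exactly this, so every $\varphi \in \End(E(M))$ can indeed be written $\varphi = \sigma + \tau$ with $\sigma, \tau$ automorphisms of $E(M)$.

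With the decomposition in hand, the conclusion is short. Since $M$ is automorphism-invariant, $\sigma(M) \subseteq M$ and $\tau(M) \subseteq M$, so for any $m \in M$ we have $\varphi(m) = \sigma(m) + \tau(m) \in M$; thus $\varphi(M) \subseteq M$. As $\varphi$ was arbitrary, $M$ is invariant under all endomorphisms of $E(M)$, and Johnson--Wong yields that $M$ is quasi-injective.

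The main obstacle is not the final additive argument, which is routine, but rather assembling the right inputs so that the Khurana--Srivastava sum-of-two-units theorem can be applied to $E(M)$ itself: one must pass the ``no $\mathbb{F}_2$ factor'' condition from $\End(M)$ up to $\End(E(M))$ via Lemma~\ref{basic}, and recognize that injectivity supplies the continuity needed for Lemma~\ref{2good}. Once these are lined up, expressing endomorphisms of the injective hull as sums of automorphisms converts the \emph{a priori} weaker automorphism-invariance into full invariance under $\End(E(M))$.
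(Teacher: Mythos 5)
Your proposal is correct and follows essentially the same route as the paper's own proof: pass the ``no $\mathbb{F}_2$ factor'' hypothesis from $\End(M)$ to $\End(E(M))$ via Lemma~\ref{basic}, apply Lemma~\ref{2good} to the (continuous, since injective) module $E(M)$ to write each endomorphism of $E(M)$ as a sum of two automorphisms, and conclude by automorphism-invariance. Your treatment is if anything slightly more explicit than the paper's, since you spell out the Johnson--Wong characterization and the continuity of $E(M)$ that the paper leaves implicit.
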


\begin{proof}
Let $M$ be an automorphism-invariant right $R$-module such that $\End(M)$ has no factor isomorphic to $\mathbb{F}_2$. Then by Lemma \ref{basic}, $\End(E(M))$ has no factor isomorphic to $\mathbb{F}_2$. Now by Lemma \ref{2good}, each element of $\End(E(M))$ is a sum of two units. Therefore, for every endomorphism $\lambda \in \End(E(M))$, we have $\lambda=u_1+u_2$ where $u_1, u_2$ are automorphisms in $\End(E(M))$. As $M$ is an automorphism-invariant module, it is invariant under both $u_1$ and $u_2$, and we get that $M$ is invariant under $\lambda$. This shows that $M$ is quasi-injective. The converse is obvious.
\end{proof}

\begin{lemma} \label{z2}
Let $R$ be any ring and $S$, a subring of its center $Z(R)$. If $\mathbb{F}_2$ does not admit a structure of right $S$-module, then for any right $R$-module $M$, the endomorphism ring $\End(M)$ has no factor isomorphic to $\mathbb{F}_2$.
\end{lemma}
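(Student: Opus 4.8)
The plan is to recast both the hypothesis and the conclusion as statements about unital ring homomorphisms into $\mathbb{F}_2$, and then to link them through the natural action of the central subring $S$ on $M$. First I would record the two reformulations. Exactly as in the proof of Lemma \ref{basic}, the identification $\mathbb{F}_2 \cong \End_{\mathbb{Z}}(\mathbb{F}_2)$ shows that a unital right $S$-module structure on the abelian group $\mathbb{F}_2$ is the same datum as a unital ring homomorphism $S \to \mathbb{F}_2$; hence the hypothesis says precisely that no ring homomorphism $S \to \mathbb{F}_2$ exists. On the other side, since every unital ring homomorphism into the field $\mathbb{F}_2$ is surjective, $\End(M)$ has a factor isomorphic to $\mathbb{F}_2$ if and only if there is a ring homomorphism $\End(M) \to \mathbb{F}_2$. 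Thus it suffices to prove that a ring homomorphism $\End(M) \to \mathbb{F}_2$ would force a ring homomorphism $S \to \mathbb{F}_2$.

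The bridge is scalar multiplication by elements of $S$. For $s \in S$ I would define $\rho_s \colon M \to M$ by $\rho_s(m) = ms$ and check that $\rho_s$ is an $R$-endomorphism: for $r \in R$ and $m \in M$, $\rho_s(mr) = m(rs) = m(sr) = (ms)r = \rho_s(m)r$, where the crucial middle step uses $s \in Z(R)$. Then I would verify that $\mu \colon S \to \End(M)$, $\mu(s) = \rho_s$, is a unital ring homomorphism. Additivity and $\mu(1) = \mathrm{id}_M$ are immediate, and multiplicativity holds because $S$ is commutative: composing the two scalar maps yields scalar multiplication by a product of $s$ and $t$, and the order is irrelevant since $st = ts$.

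Finally I would compose. Assuming, for contradiction, a ring homomorphism $\varphi \colon \End(M) \to \mathbb{F}_2$, the map $\varphi \circ \mu \colon S \to \mathbb{F}_2$ is a composite of unital ring homomorphisms, hence a unital ring homomorphism, which by the first paragraph contradicts the hypothesis. This yields the claim. The argument is short; the only points that genuinely require attention—and where the hypotheses are actually used—are the two reformulations into ring-homomorphism language and the well-definedness of $\mu$, which needs $S$ to be central in $R$ (so that each $\rho_s$ is $R$-linear) rather than merely commutative in itself.
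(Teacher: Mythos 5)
Your proposal is correct and follows essentially the same route as the paper: assume a ring homomorphism $\End(M)\to\mathbb{F}_2$, use the central scalar action $s\mapsto(m\mapsto ms)$ to get a ring homomorphism $S\to\End(M)$, and compose to contradict the hypothesis. Your write-up is in fact more careful than the paper's (explicitly checking $R$-linearity via centrality and the module-structure/ring-homomorphism reformulation), but the underlying argument is identical.
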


\begin{proof}
Assume to the contrary that there is a ring homomorphism $\psi: \End_R(M) \rightarrow \mathbb{F}_2$. Now, define a map $\varphi: S\rightarrow \End_R(M)$ by the rule $\varphi(r)=\varphi_r$, for each $r\in S$, where $\varphi_r:M\rightarrow M$ is given as $\varphi_r(m)=mr$. Clearly $\varphi$ is a ring homomorphism since $S\subseteq Z(R)$ and so, the composition $\varphi \circ f$ gives a nonzero ring homomorphism from $S$ to $\mathbb{F}_2$, yielding a contradiction to the assumption that $\mathbb{F}_2$ does not admit a structure of right $S$-module.
\end{proof}

We can now extend the above mentioned result of Dickson and Fuller.

\begin{theorem} \label{main}
Let $A$ be an algebra over a field $\mathbb F$ with more than two elements. Then any right $A$-module $M$ is automorphism-invariant if and only if $M$ is quasi-injective.
\end{theorem}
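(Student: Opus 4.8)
The plan is to reduce Theorem~\ref{main} to the already-proven Theorem~\ref{char} by verifying that the hypothesis ``$\End(M)$ has no factor isomorphic to $\mathbb{F}_2$'' holds automatically for every right $A$-module $M$ when $A$ is an algebra over a field $\mathbb{F}$ with $|\mathbb{F}| > 2$. Once that hypothesis is established, Theorem~\ref{char} delivers the equivalence of automorphism-invariance and quasi-injectivity immediately, so the entire content of the argument is this verification.

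To carry it out, I would invoke Lemma~\ref{z2} with a well-chosen central subring $S$. Since $A$ is an $\mathbb{F}$-algebra, the field $\mathbb{F}$ maps into the center $Z(A)$ via the structural homomorphism (the unit map $\alpha \mapsto \alpha \cdot 1_A$), and I would take $S$ to be the image of $\mathbb{F}$ inside $Z(A)$. The first step is thus to note that $S \subseteq Z(A)$ and that $S$ is a field isomorphic to $\mathbb{F}$ (the structural map is injective as $\mathbb{F}$ is a field and $1_A \neq 0$), so in particular $|S| > 2$. The second step is the key computation required by Lemma~\ref{z2}: I must check that $\mathbb{F}_2$ admits no right $S$-module structure. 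Since $S$ is a field with more than two elements, any unital ring homomorphism $S \to \mathbb{F}_2 \cong \End_{\mathbb{Z}}(\mathbb{F}_2)$ would be a field homomorphism out of $S$; but a ring homomorphism of fields is injective, forcing $|S| \le |\mathbb{F}_2| = 2$, contradicting $|S| > 2$. Hence no such homomorphism exists, which is exactly the statement that $\mathbb{F}_2$ carries no right $S$-module structure.

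With both hypotheses of Lemma~\ref{z2} in hand, I would conclude that for \emph{every} right $A$-module $M$, the endomorphism ring $\End(M)$ has no factor isomorphic to $\mathbb{F}_2$. This is precisely the hypothesis of Theorem~\ref{char}, so applying that theorem to an arbitrary right $A$-module $M$ yields that $M$ is automorphism-invariant if and only if it is quasi-injective, completing the proof.

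The argument is essentially a bookkeeping reduction, so I do not anticipate a serious obstacle; the only point requiring minor care is the handling of unitality when one treats $\mathbb{F}_2$ as a ring and interprets ``admits a right $S$-module structure'' as ``admits a unital ring homomorphism $S \to \mathbb{F}_2$.'' One must make sure the relevant homomorphisms preserve identities (so that $1_S$ acts as $1$ on $\mathbb{F}_2$), since otherwise the zero map would spuriously count as a module structure; the cardinality argument then cleanly rules out the nonzero possibility. This matches the usage in the proof of Lemma~\ref{z2}, where the composed map $S \to \mathbb{F}_2$ is required to be a \emph{nonzero} ring homomorphism.
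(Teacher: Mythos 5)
Your proposal is correct and follows essentially the same route as the paper: both reduce Theorem~\ref{main} to Theorem~\ref{char} by using Lemma~\ref{z2} to show $\End(M)$ has no factor isomorphic to $\mathbb{F}_2$. Your write-up is in fact slightly more careful than the paper's, since you explicitly identify the central subring $S$ as the image of $\mathbb{F}$ in $Z(A)$ and justify why no unital ring homomorphism $S\rightarrow\mathbb{F}_2$ can exist.
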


\begin{proof}
Let $M$ be an automorphism-invariant right $A$-module. Since $A$ is an algebra over a field $\mathbb F$ with more than two elements, by Lemma \ref{z2}, it follows that $\mathbb{F}_2$ does not admit a structure of right $Z(A)$-module and therefore $\End(M)$ has no factor isomorphic to $\mathbb{F}_2$. Now, by Theorem \ref{char}, $M$ must be quasi-injective. The converse is obvious.
\end{proof}

As a consequence we have the following

\begin{corollary}
Let $R$ be any algebra over a field $\mathbb F$ with more than two elements. Then $R$ is of right invariant module type if and only if every indecomposable right $R$-module is automorphism-invariant. 
\end{corollary}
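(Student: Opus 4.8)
The plan is to reduce the statement directly to Theorem \ref{main}, which does all the genuine work. Since $R$ is an algebra over a field $\mathbb{F}$ with more than two elements, Theorem \ref{main} applies to \emph{every} right $R$-module $M$, asserting that $M$ is automorphism-invariant if and only if $M$ is quasi-injective. The two conditions compared in the corollary are then nothing more than the two sides of this equivalence quantified over a single fixed class of modules, so the corollary should follow by a purely formal matching of quantifiers.

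First I would unwind the definition of right invariant module type: by definition, $R$ is of right invariant module type precisely when every indecomposable right $R$-module is quasi-injective. Thus both statements being compared are universal assertions ranging over the same class, namely the indecomposable right $R$-modules, carrying the predicate \emph{quasi-injective} in one case and \emph{automorphism-invariant} in the other. Once this is made explicit, the equivalence of the two universal statements reduces to the equivalence of the two predicates on each member of the class.

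For the forward implication, I would assume $R$ is of right invariant module type, take an arbitrary indecomposable right $R$-module $M$, note that it is quasi-injective by hypothesis, and conclude by Theorem \ref{main} that $M$ is automorphism-invariant (here one may even bypass the theorem, since any quasi-injective module is trivially automorphism-invariant: every automorphism of $E(M)$ is in particular an endomorphism). For the converse, I would assume every indecomposable right $R$-module is automorphism-invariant, and apply Theorem \ref{main} to each such $M$ to deduce quasi-injectivity, whence $R$ is of right invariant module type.

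Since the entire argument is a module-by-module invocation of Theorem \ref{main}, I do not expect any genuine obstacle. The only point requiring a moment's care is that Theorem \ref{main} is stated for \emph{arbitrary} right $A$-modules, so it applies uniformly to each indecomposable module without any supplementary indecomposability hypothesis; no extra structural argument about indecomposable modules is needed beyond aligning the universal quantifiers on the two sides.
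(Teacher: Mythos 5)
Your proof is correct and matches the paper's intent exactly: the paper states this corollary without proof as an immediate consequence of Theorem \ref{main}, which is precisely the module-by-module reduction you carry out. Unwinding the definition of right invariant module type and applying the theorem to each indecomposable module is all that is needed.
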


\begin{corollary}
If $A$ is an algebra over a field $\mathbb F$ with more than two elements such that $A$ is automorphism-invariant as a right $A$-module, then $A$ is right self-injective.
\end{corollary}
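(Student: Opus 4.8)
The plan is to combine Theorem \ref{main} with the elementary observation that quasi-injectivity of a ring, viewed as a module over itself, already forces injectivity. First I would apply Theorem \ref{main}: since $A$ is an algebra over a field with more than two elements and is automorphism-invariant as a right $A$-module, the theorem immediately yields that $A_A$ is quasi-injective. Thus the corollary reduces to the purely ring-theoretic assertion that any ring which is quasi-injective as a right module over itself is right self-injective.

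To carry out this reduction, I would use the canonical identification $\End(A_A) \cong A$, in which an element $a \in A$ corresponds to the left multiplication map $\ell_a : x \mapsto ax$. This holds because any $f \in \End(A_A)$ satisfies $f(x) = f(1 \cdot x) = f(1)\,x$, so that $f = \ell_{f(1)}$. Under this identification the submodules of $A_A$ are precisely the right ideals of $A$, and every endomorphism of $A_A$ is precisely left multiplication by some element of $A$.

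With this dictionary in place, quasi-injectivity of $A_A$ says exactly that for every right ideal $I$ and every right $A$-homomorphism $g : I \to A$ there is an element $a \in A$ with $g(x) = ax$ for all $x \in I$; equivalently, $g$ extends to $\ell_a \in \End(A_A)$. But this is verbatim Baer's criterion for the injectivity of $A_A$. Hence $A$ is right self-injective, which completes the argument. I do not expect a genuine obstacle here: the only point requiring care is the identification of $\End(A_A)$ with $A$ and the attendant recognition that the defining extension property of a quasi-injective $A_A$ coincides word-for-word with Baer's criterion, after which no further choice or extension problem remains.
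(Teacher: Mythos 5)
Your proof is correct and is exactly the argument the paper intends (the paper states this corollary without proof as an immediate consequence of Theorem \ref{main}): apply Theorem \ref{main} to get that $A_A$ is quasi-injective, and then observe via $\End(A_A)\cong A$ that quasi-injectivity of $A_A$ coincides with Baer's criterion, so $A$ is right self-injective.
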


It is well-known that a group ring $R[G]$ is right self-injective if and only if $R$ is right self-injective and $G$ is finite (see \cite{C}, \cite{R}). Thus, in particular, we have the following

\begin{corollary}
Let $K[G]$ be automorphism-invariant, where $K$ is a field with more than two elements. Then $G$ must be finite.
\end{corollary}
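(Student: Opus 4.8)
The plan is to read this off directly from the two results that immediately precede it, so the argument is essentially a matter of assembling the right hypotheses rather than proving anything new. I would observe first that $K[G]$ is an algebra over the field $K$, and that by assumption $K$ has more than two elements. This places us squarely in the setting of the previous corollary, the one asserting that an automorphism-invariant algebra is self-injective, so I would apply that corollary with the algebra taken to be $A = K[G]$.

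Concretely, the standing hypothesis of the present statement is that $K[G]$ is automorphism-invariant as a right $K[G]$-module. The preceding corollary then yields at once that $K[G]$ is right self-injective. Internally this rests on Theorem \ref{main}, which upgrades automorphism-invariance to quasi-injectivity over an algebra over a field with more than two elements, together with the observation that a ring which is quasi-injective as a right module over itself is right self-injective. None of this needs to be re-derived here, since it is exactly what the earlier corollary packages.

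Next I would invoke the cited characterization of self-injective group rings, namely that $R[G]$ is right self-injective if and only if $R$ is right self-injective and $G$ is finite. To apply it I take $R = K$. Since $K$ is a field, hence a division ring, it is right self-injective as a module over itself, so the first of the two conditions is automatically met. The characterization therefore forces the remaining condition, that $G$ is finite, which is the desired conclusion.

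There is no genuine obstacle in this argument, as all the substantive content has already been established in Theorem \ref{main}, the two corollaries above, and the external group-ring theorem. The only points demanding care are bookkeeping: verifying that $K[G]$ really does satisfy the hypotheses of the self-injectivity corollary (it is a $K$-algebra with $|K| > 2$), and noting that the coefficient field $K$ trivially satisfies the self-injectivity hypothesis of the group-ring theorem, so that finiteness of $G$ is the sole remaining constraint.
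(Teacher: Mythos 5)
Your proof is correct and follows exactly the route the paper intends: apply the preceding corollary to conclude $K[G]$ is right self-injective, then invoke the Connell--Renault characterization of self-injective group rings (with $K$ a field trivially self-injective) to force $G$ finite. Nothing further is needed.
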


\bigskip

\bigskip

\bigskip

\bigskip

\end{document}